\newcommand{\cE}{{\mathcal{E}}}
\newcommand{\cL}{{\mathcal{L}}}
\newcommand{\cM}{{\mathcal{M}}}
\newcommand{\cO}{{\mathcal{O}}}
\newcommand{\cU}{{\mathcal{U}}}
\newcommand{\cV}{{\mathcal{V}}}
\newcommand{\PP}{\mathbb{P}}
\newcommand{\ZZ}{\mathbb{Z}}
\newcommand{\CC}{\mathbb{C}}
\newcommand{\DD}{\mathbb{D}}
\newcommand{\fg}{\mathfrak{g}}
\newcommand{\kbar}{\overline{k}}
\newcommand{\Ohat}{\widehat{\cO}}
\newcommand{\tG}{\widetilde{G}}
\newcommand{\tfg}{\widetilde{\fg}}
\newcommand{\Gm}{\mathbb{G}_{\mathrm{m}}}
\DeclareMathOperator{\Spec}{Spec}
\DeclareMathOperator{\Pic}{Pic}
\DeclareMathOperator{\Hom}{Hom}
\DeclareMathOperator{\Aut}{Aut}
\DeclareMathOperator{\PGL}{PGL}
\DeclareMathOperator{\SL}{SL}
\DeclareMathOperator{\ad}{ad}
\DeclareMathOperator{\rank}{rank}
\newcommand{\dual}{\mathrm{dual}}
\newcommand{\univ}{\mathrm{univ}}
\newcommand{\stab}{\mathrm{s}}
\newcommand{\regstab}{\mathrm{rs}}
\newcommand{\Poinc}{\mathrm{Poinc}}
\newcommand{\too}{\longrightarrow}
\newtheorem{proposition}{Proposition}[section]
\newtheorem{theorem}[proposition]{Theorem}
\newtheorem*{theorem*}{Theorem}
\newtheorem{corollary}[proposition]{Corollary}
\theoremstyle{remark}
\newtheorem{remark}[proposition]{Remark}
\title[Stability of the Poincar\'e bundle]{Stability of the Poincar\'e bundle}
\author[I. Biswas]{Indranil Biswas}
\address{School of Mathematics, Tata Institute of Fundamental
Research, Homi Bhabha Road, Bombay 400005, India}
\email{indranil@math.tifr.res.in}
\author[T. L. G\'omez]{Tom\'as L. G\'omez}
\address{Instituto de Ciencias Matem\'aticas (CSIC-UAM-UC3M-UCM),
Nicol\'as Cabrera 15, Campus Cantoblanco UAM, 28049 Madrid, Spain}
\email{tomas.gomez@icmat.es}
\author[N. Hoffmann]{Norbert Hoffmann}
\address{Department of Mathematics and Computer Studies, Mary
Immaculate College, South Circular Road, Limerick, Ireland}
\email{norbert.hoffmann@mic.ul.ie}
\subjclass[2000]{14H60, 14D23, 14D20}
\keywords{Moduli stack, Poincar\'e bundle, stability, moduli space}
\date{}
\begin{document}

\begin{abstract}
Let $X$ be an irreducible smooth projective curve, of genus at least two, over an 
algebraically closed field $k$. Let $\cM^d_G$ denote the moduli stack of principal 
$G$--bundles over $X$ of fixed topological type $d \in \pi_1(G)$, where $G$ is any almost simple 
affine algebraic group over $k$. We prove that the universal bundle over $X 
\times \cM^d_G$ is stable with respect to any polarization on $X \times \cM^d_G$. A 
similar result is proved for the Poincar\'e adjoint bundle over $X \times M_G^{d, 
\regstab}$, where $M_G^{d, \regstab}$ is the coarse moduli space of regularly stable 
principal $G$--bundles over $X$ of fixed topological type $d$.
\end{abstract}

\maketitle

\section{Introduction}

Let $X$ be a compact connected Riemann surface of genus $g_X \geq 2$.
Fix an integer $r \geq 2$ and a holomorphic line bundle $L$ over $X$ of degree $d$ such that $r$ is coprime to $d$.
Let $M_{r, L}$ denote the coarse moduli space of all stable vector bundles $E$ over $X$ with $\rank( E) = r$ and $\bigwedge^r E \cong L$.
A vector bundle over $X \times M_{r, L}$ is called a \emph{Poincar\'{e} bundle} if its restriction to each closed point $[E] \in M_{r, L}$
is isomorphic to $E$. 
It is known that Poincar\'e bundles over $X \times M_{r, L}$ exist,
and that any two of them differ by tensoring with a line bundle pulled back from $M_{r, L}$.
Balaji, Brambila-Paz and Newstead proved in \cite{BBN} that each Poincar\'e bundle over $X \times M_{r, L}$ is stable
with respect to any polarization on $X \times M_{r, L}$. This result allows to use Poincar\'e bundles to study
moduli spaces of vector bundles on the smooth projective varieties $M_{r, L}$ and $X \times M_{r, L}$, as constructed in \cite{Ma}.
It also provides an interesting metric on the Poincar\'e bundle via the Donaldson-Uhlenbeck-Yau correspondence \cite{Do,UY}.

The same question can be asked more generally for moduli spaces of principal $G$--bundles over $X$.
These moduli spaces are no longer smooth projective, and Poincar\'e $G$--bundles need not exist \cite{BH3}.
But over the open locus of regularly stable $G$--bundles, a Poincar\'e $G^{\ad}$--bundle always exists,
where $G^{\ad}$ denotes the quotient of $G$ modulo its center,
and the question whether it is stable still makes sense. For orthogonal and symplectic bundles, this stability is proved in \cite{BG}.

Now let $X$ be an irreducible smooth projective curve of genus $g_X \geq 2$ over an algebraically closed field $k$.
Let $G$ be a smooth connected almost simple algebraic group over $k$.
Let $\cM^d_G$ be a connected component of the moduli stack of principal $G$--bundles over $X$;
these connected components are indexed by the elements $d \in \pi_1( G)$.
From this point of view, we approach the above stability question in this paper.
In Section \ref{sec:stack}, we prove our main result, Theorem \ref{thm1}, which states the following:
\begin{theorem*}
The universal principal $G$--bundle over $X \times \cM^d_G$ is stable with respect to any polarization on $X \times \cM^d_G$.
\end{theorem*}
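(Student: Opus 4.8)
The plan is to check Ramanathan's stability criterion for the universal $G$--bundle $\cE$ on $Y := X \times \cM^d_G$ head on. By definition, $\cE$ is stable with respect to a polarization $H$ if and only if for every maximal parabolic subgroup $P \subsetneq G$, every reduction of structure group $\sigma \colon Y \too \cE/P$, and every dominant character $\chi$ of $P$ that is trivial on the center of $G$, the associated line bundle $\sigma^*\cL_\chi$ has strictly negative $H$--degree. So I would fix $P$, $\sigma$, $\chi$ and an arbitrary ample class $H$ on $Y$, and reduce everything to showing $\deg_H(\sigma^*\cL_\chi) < 0$. It is equivalent, and sometimes more convenient, to phrase this through the adjoint bundle $\ad(\cE)$ associated to $\fg$, in which the reduction produces the subbundle $\ad(\sigma)$ with fiber $\mathrm{Lie}(P)$; since $\ad(\cE)$ has vanishing first Chern class, Ramanathan stability is closely related to slope stability of $\ad(\cE)$ with strictness along such subbundles.

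The heart of the argument is a degree computation that separates the two factors of $Y$. Let $p$ and $q$ denote the projections to $X$ and to $\cM^d_G$. Because $X$ is a curve, every self-intersection of $H$ that I need collapses onto at most the first power of the point class of $X$, so $\deg_H(\sigma^*\cL_\chi)$ is a positive combination, with strictly positive weights depending on $H$, of just two numbers: the degree $c$ of $\sigma^*\cL_\chi$ restricted to a curve slice $X \times \{m\}$, and the degree $c'$ of $\sigma^*\cL_\chi$ restricted to a moduli slice $\{x\} \times \cM^d_G$ (any mixed contribution drops out for cohomological reasons). Letting the weights range over all ample $H$, I would show that $\deg_H(\sigma^*\cL_\chi) < 0$ holds for every polarization if and only if $c \le 0$ and $c' \le 0$ with $(c,c') \ne (0,0)$. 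This is exactly what makes the statement robust under change of polarization: the two directional degrees are intrinsic, while the polarization only supplies positive weights.

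The curve-direction degree $c$ is then settled by a connectedness observation. As $m$ varies, $c = \deg\bigl(\sigma^*\cL_\chi|_{X \times \{m\}}\bigr)$ is locally constant, and $\cM^d_G$ is connected, so $c$ is a single topological invariant of the reduction $\sigma$. In particular it may be evaluated on the dense open regularly stable locus, where the fiber $\cE|_{X\times\{m\}}$ is a stable $G$--bundle on $X$; there the restricted reduction $\sigma|_{X\times\{m\}}$ is a genuine reduction to the proper parabolic $P$, and Ramanathan's criterion for the single curve $X$ forces $\deg(\sigma^*\cL_\chi|_{X\times\{m\}}) < 0$. Hence $c < 0$ for every reduction $\sigma$. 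The point of passing to the stable locus is precisely that it lets the stability of individual bundles on $X$ do the work, even though the full stack $\cM^d_G$ also contains highly unstable bundles for which no such bound is available.

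The principal obstacle is the moduli-direction degree $c'$, where there is no analogous stability on $\cM^d_G$ to invoke. Here I would compute the class of the line bundle $\sigma^*\cL_\chi|_{\{x\}\times\cM^d_G}$ in $\Pic(\cM^d_G)$ explicitly. For $G$ almost simple this group is essentially generated by a single natural ample determinant-of-cohomology line bundle, so the restricted class is a known rational multiple of the ample generator, and $c'$ is that multiple times a positive number. The task is to pin down the sign of this multiple: I expect it to be governed by the pairing of the dominant character $\chi$ with the reduction datum through a Killing-form computation, giving $c' \le 0$ for dominant $\chi$. Establishing this sign uniformly, while handling the fact that $\cM^d_G$ is a smooth but non-quasi-compact algebraic stack — so that the determinant line bundle and its positivity must first be set up on a sufficiently large quasi-compact open substack containing the regularly stable locus — is the step I expect to require the most care. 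Once $c' \le 0$ is in hand, the combination with $c < 0$ yields $\deg_H(\sigma^*\cL_\chi) < 0$ for every polarization $H$, which is Ramanathan stability of $\cE$.
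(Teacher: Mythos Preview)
Your global strategy coincides with the paper's: split the degree of the reduction line bundle into a curve-slice contribution $c$ and a moduli-slice contribution $c'$, get $c<0$ by restricting to a point of the stable locus of $\cM_G^d$, and get $c'\le 0$ separately; then any positive combination is negative. The paper packages the last step as its own proposition, namely that the restriction $\cE^{\univ}_x$ of the universal bundle to $\{x\}\times\cM_G^d$ is semistable, and then invokes the same BBN-type argument you outline.

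The gap in your proposal is exactly that moduli-direction step, which is where all the content lies. You say the class of $\sigma^*\cL_\chi|_{\{x\}\times\cM_G^d}$ in $\Pic(\cM_G^d)\otimes\mathbb{Q}\cong\mathbb{Q}$ should be computable from a ``Killing-form pairing of $\chi$ with the reduction datum'', but there is no discrete reduction datum here: the central charge of that line bundle genuinely depends on the section $\sigma$, not only on $P$ and $\chi$. Knowing that $\Pic(\cM_G^d)$ has rank one gives you no control over the sign for an \emph{arbitrary} reduction; if it did, the same argument would show every $G$--bundle on $\cM_G^d$ is semistable. The paper supplies the missing geometric input in a quite different way: pick a second point $y\neq x$ and pull back along the uniformization map $q_y^{\delta}\colon Q_{\tG}\to\cM_G^d$ from the affine Grassmannian (Drinfeld--Simpson). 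Because the gluing happens at $y$, the pullback of $\cE^{\univ}_x$ to $Q_{\tG}$ is the \emph{trivial} $G$--bundle; hence the pulled-back line bundle sits inside a trivial vector bundle, its dual has a nonzero section, and the vanishing $H^0(Q_{\tG},L)=0$ for line bundles of negative central charge forces $c'\le 0$. No Killing-form computation enters; the sign comes from triviality after uniformization together with positivity on $Q_{\tG}$.
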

The proof uses the description of $\cM^d_G$ provided by \cite{KNR,BL,BLS},
based on the uniformization theorem of Drinfeld and Simpson \cite[Theorem 3]{DS}.
We first prove as Proposition \ref{prop-s} that the restriction of the universal principal $G$--bundle
to the slice $\{x\} \times \cM^d_G$ is semistable for any point $x$ on $X$. 
From this we deduce Theorem \ref{thm1}.

Section \ref{sec:coarse} deals with consequences concerning the coarse moduli scheme $M^d_G$. In particular, 
we prove as Corollary \ref{cor:stab} that the Poincar\'{e} $G^{\ad}$--bundle is stable with respect to any polarization.
Along the way, we again obtain that the restriction of this Poincar\'{e} $G^{\ad}$--bundle to the slice given by any point $x$ on $X$ is semistable. 

\subsection*{Acknowledgements}

The first author is supported by a J. C. Bose Fellowship.
The second author acknowledges funding from the Spanish 
MICINN (grants MTM2016-79400-P, PID2019-108936GB-C21,
and ICMAT Severo Ochoa projects
SEV-2015-0554 and CEX2019-000904-S), 
the 7th European Union Framework Programme
(Marie Curie IRSES grant 612534 project MODULI) and CSIC
(2019AEP151 and \textit{Ayuda extraordinaria a Centros de Excelencia 
Severo Ochoa} 20205CEX001).
The third author was supported by Mary Immaculate College Limerick through the PLOA sabbatical programme.
He thanks the Tata Institute of Fundamental Research in Bombay for its hospitality. 

\section{Stability over the moduli stack} \label{sec:stack}

Let $X$ be an irreducible smooth projective curve over an algebraically closed field $k$.
In this section, we allow the base field $k$ to have arbitrary characteristic.

Let $G$ be a smooth connected reductive algebraic group over $k$.
Let $\cM_G$ denote the moduli stack of principal $G$--bundles over $X$.
This stack is smooth over $k$, and its connected components $\cM_G^d$ are indexed by
the elements
\begin{equation*}
d \in \pi_1(G) := \Lambda_{T_G}/\Lambda_{\rm coroots}
\end{equation*}
according to \cite[Proposition 1.3]{BLS} and \cite[Theorem 5.8]{Ho}.
Here $\Lambda_{T_G} = \Hom( \Gm, T_G)$
is the cocharacter lattice of a maximal torus $T_G \subseteq G$,
and $\Lambda_{\rm coroots} \subseteq \Lambda_{T_G}$ is the coroot lattice.
If $k = \CC$, then $\pi_1(G)$ coincides with the topological fundamental group.

From now on, we assume that $G$ is almost simple. There is a natural homomorphism
\begin{equation} \label{eq:charge}
 \Pic( \cM_G^d) \too \ZZ,
\end{equation}
called \emph{central charge}, whose kernel and cokernel are both finite.
Its definition will be recalled in the proof of Proposition \ref{prop-s} below.
This central charge has been constructed in \cite[Theorem 2.4]{KN} and \cite[Proposition 1.5]{BLS}
for the case $k = \CC$, and in \cite[Theorem 17]{Fa} and \cite[Theorem 5.3.1]{BH} for arbitrary characteristic.

Let $\cL$ be a line bundle over an open substack $\cU \subseteq \cM_G^d$.
We say that $\cU$ is \emph{big} if its complement has codimension at least $2$.
Then $\cL$ extends uniquely to $\cM_G^d$ by \cite[Lemma 7.3]{BH3},
and we define $\deg \cL$ to be the central charge of this unique extension.
For a vector bundle $\cV$ of rank $r$ over $\cU$, we define $\deg \cV$ to be $\deg( \bigwedge^r \cV)$.

Let $\cE \too \cM_G^d$ be a principal $G$--bundle. We say that $\cE$
is \emph{stable} (respectively, \emph{semistable}) if for every reduction
$\cE \supset \cE_P \too \cU$ to a parabolic subgroup $P \subset G$
over a big open substack $\cU \subseteq \cM_G^d$, and every
strictly dominant character $\chi: P \too \Gm$ trivial on
the center of $G$, the associated line bundle $\cE_P(\chi)$ over $\cU$ satisfies
\begin{equation*}
 \deg \cE_P(\chi) < 0 \quad (\text{respectively, } \deg \cE_P(\chi) \leq 0).
\end{equation*}
Note that no choice of a polarization is needed here; it is given by the central charge.

For a principal $G$--bundle over $X \times \cM_G^d$, stability and semistability can be defined similarly,
but only after the choice of a polarization. For that, consider homorphisms
\begin{equation*}
\Pic( X \times \cM_G^d) \too \ZZ
\end{equation*}
that are locally constant in flat families of line bundles. Since $\Pic( \cM_G^d)$ is discrete, 
any such homomorphism is a linear combination of the degree on $X$ and the
central charge on $\cM_G^d$; we call it a polarization if it is a positive linear combination.
In the above definition of stability, the central charge can be replaced by any such polarization.

Let $\cE^{\univ} \too X \times \cM_G$ be the universal principal $G$--bundle, and let $x \in X$ be a closed point.
The principal bundle over $\cM_G$ obtained by restricting $\cE^{\univ}$ to the slice $\{x\} \times \cM_G$,
and also its further restriction to $\cM_G^d$, will both be denoted by $\cE^{\univ}_x$.

\begin{proposition}\label{prop-s}
 Let $G$ be a smooth connected almost simple algebraic group.
 Let $\cM_G^d$ be a connected component of the moduli stack of principal $G$--bundles over $X$.
 Then the principal $G$--bundle $\cE^{\univ}_x$ over $\cM_G^d$ is semistable.
\end{proposition}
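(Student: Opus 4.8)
The plan is to unwind the definition of semistability and reduce everything to the sign of a single central charge. So fix a reduction $\cE_P \to \cU$ of $\cE^{\univ}_x$ to a parabolic $P \subseteq G$ over a big open substack $\cU \subseteq \cM_G^d$, together with a strictly dominant character $\chi \colon P \to \Gm$ trivial on the centre of $G$; I must show $\deg \cE_P(\chi) \le 0$. My first move is to realize $\cE_P(\chi)$ inside an associated vector bundle. Let $V_\chi$ be the irreducible $G$--representation of highest weight $\chi$ and put $\cV := \cE^{\univ}_x \times^G V_\chi$, a vector bundle over $\cM_G^d$. The highest weight line of $V_\chi$ is $P$--stable and transforms by $\chi$, so the reduction $\cE_P$ produces a sub--line bundle $\cE_P(\chi) \hookrightarrow \cV$ over $\cU$. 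Moreover, since $G$ is almost simple it is semisimple, hence $X^*(G) = 0$, so $\det V_\chi$ is the trivial character and $\det \cV = \cE^{\univ}_x \times^G \det V_\chi$ is the trivial line bundle; thus $\deg \cV = 0$. In this language the claim becomes: every sub--line bundle of $\cV$ coming from such a reduction has non--positive degree, i.e. $\cV$ is not destabilized by $\cE_P(\chi)$. This is exactly semistability of the slope--$0$ bundle $\cV$ against the distinguished family of sub--line bundles.

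To compute the central charge I would invoke the uniformization of \cite{KNR,BL,BLS,DS}. Fixing the point $x$, write $\cM_G^d = [\Gamma \backslash \mathrm{Gr}_G]$, where $\mathrm{Gr}_G = LG/L^+G$ is the relevant component of the affine Grassmannian, $L^+G = G(\cO_x)$, $LG = G(K_x)$, and $\Gamma = G(\cO(X \setminus x))$ acts through changes of trivialization off $x$. In this model $\cE^{\univ}_x$ is the $G$--bundle associated to the tautological $L^+G$--torsor $LG \to \mathrm{Gr}_G$ via the evaluation $ev_x \colon L^+G \to G$, and the central charge $\Pic(\cM_G^d) \too \ZZ$ is (up to the finite kernel and cokernel) the pullback to $\mathrm{Gr}_G$ followed by the degree against the ample generator of $\Pic(\mathrm{Gr}_G)$. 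The reduction $\cE_P$ pulls back to a section $s$ of the partial affine flag variety $LG/ev_x^{-1}(P) \too \mathrm{Gr}_G$, defined over a big open subset of $\mathrm{Gr}_G$ whose complement still has codimension at least $2$; here $ev_x^{-1}(P)$ is the parahoric subgroup determined by $P$. The character $\chi$ extends through $ev_x$ to this parahoric and defines an equivariant line bundle $\cL_\chi$ on the affine flag variety, with $\deg \cE_P(\chi)$ equal to the degree of $s^* \cL_\chi$ against the ample generator of $\Pic(\mathrm{Gr}_G)$. Because the complement has codimension at least $2$, the line bundle $s^* \cL_\chi$ extends uniquely to all of $\mathrm{Gr}_G$, so this degree is well defined.

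The hard part will be the sign of $\deg s^* \cL_\chi$. On the fibres $G/P$ of the affine flag variety the bundle $\cL_\chi$ is anti--ample, by the standard Borel--Weil convention that makes $\deg \cE_P(\chi) < 0$ the stability condition; but $\cL_\chi$ is \emph{not} anti--nef on the total space, since its degree along the affine Schubert curve is governed instead by the pairing $\langle \chi, \theta^\vee\rangle \ge 0$ with the highest coroot. Thus a naive decomposition of $s_*[C]$ into horizontal and vertical Schubert classes mixes contributions of both signs, and a single test curve cannot settle the inequality: indeed the restriction of $\cE^{\univ}_x$ to one affine Schubert $\PP^1$ is typically unstable. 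The whole point is that $s$ exists globally over a big open of $\mathrm{Gr}_G$, which is far more restrictive than a lift over one curve. So the crux is to leverage this global codimension--$2$ reduction — equivalently, the semistability of the degree--$0$ associated bundle $\cV$ over the stack — to force $\deg s^*\cL_\chi \le 0$, combining the triviality of $\det \cV$ with the positivity of the ample generator on $\mathrm{Gr}_G$ and the rank--one structure of $\Pic(\cM_G^d)$. I expect this disentangling of the finite (fibrewise) and affine contributions, carried out for a reduction defined almost everywhere rather than on a test curve, to be the main obstacle; everything else is formal translation through the uniformization.
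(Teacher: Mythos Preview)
Your setup is fine, and you correctly isolate the obstruction: after uniformizing at $x$, the pullback of $\cE^{\univ}_x$ to $\mathrm{Gr}_G$ is the tautological bundle coming from $ev_x\colon L^+G\to G$, which is \emph{not} trivial, and the associated line bundle $\cL_\chi$ on the affine flag variety mixes positive and negative contributions along horizontal and vertical Schubert curves. You then say that disentangling these is ``the main obstacle'' and leave it open. As written this is a gap, not a proof: the global codimension--$2$ existence of the section $s$ does not by itself force the sign you want, and nothing in your outline explains how to extract the inequality from it.

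The missing idea is to uniformize at a \emph{different} point $y\neq x$. Glueing along the punctured disc at $y$ gives a morphism $q_y\colon Q_G\to\cM_G$ for which the pullback of $\cE^{\univ}$ to $X\times Q_G$ carries a trivialization over $(X\setminus\{y\})\times Q_G$; since $x\neq y$, the pullback of $\cE^{\univ}_x$ to $Q_G$ (and hence to $Q_{\tG}$ via $t^\delta\cdot\pi(\_)$) is \emph{trivial}. Now your own first paragraph finishes the argument: the sub--line bundle $\cE_P(\chi)\hookrightarrow\cV$ over $\cU$ extends by Hartogs to a nonzero morphism $\cL\to\cV$ over $\cM_G^d$, and after pullback along $q_y^\delta$ this becomes a nonzero morphism from $L:=(q_y^\delta)^*\cL$ into a trivial vector bundle on $Q_{\tG}$. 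Hence $H^0(Q_{\tG},L^{\dual})\neq 0$, and since $Q_{\tG}$ is an inductive limit of reduced irreducible projective Schubert varieties with $\Pic(Q_{\tG})\cong\ZZ$ and an ample generator, this forces $\deg(L_{|\PP^1})\leq 0$. That degree is precisely the central charge of $\cL$, i.e.\ $\deg\cE_P(\chi)$. The whole ``hard part'' you describe evaporates once the pullback of $\cV$ is trivial rather than merely of degree zero; the trick is purely geometric---move the uniformization point away from $x$---and no analysis of horizontal versus vertical Schubert classes is needed.
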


\begin{proof}
 Choose a closed point $y \in X \setminus \{x\}$, and an isomorphism $\Ohat_{X, y} \cong k[[t]]$.
 Recall, e.\,g.\ from \cite{Fa}, that the \emph{loop group} $LG$ is an ind-scheme over $k$ with
 \begin{equation*}
 LG( k)\, = \,G( k((t))),
 \end{equation*}
  the \emph{positive loop group} is the sub-group scheme $L^+ G \subseteq LG$ given by
  \begin{equation*}
  L^+G( k) = G( k[[t]]),
  \end{equation*}
  and the \emph{affine Grassmannian} of $G$ is the ind-projective variety
  \begin{equation*}
    Q_G := LG/L^+ G.
  \end{equation*}

  The glueing procedure of \cite[Definition 1.4]{KNR} defines a morphism
  \begin{equation*}
    q_y: Q_G \too \cM_G
  \end{equation*}
  that sends the class of $\gamma \in LG$ to the trivial $G$--bundles over the disc $\DD := \Spec \Ohat_{X, y}$
  and over $X \setminus \{y\}$, glued by the automorphism $\gamma$ of the trivial $G$--bundle over $\DD \setminus \{y\}$.
  This morphism $q_y$ is well-defined because multiplication by $L^+ G$ from the right can be compensated by changing the trivialization over $\DD$.
  In particular, the pullback of $\cE^{\univ}$ from $X \times \cM_G$ to $X \times Q_G$ still comes with a trivialization over $X \setminus \{y\}$.
  Since $x \neq y$, it follows that the pullback of $\cE^{\univ}_x$ from $\cM_G$ to $Q_G$ is trivial as well.

  Now choose a lift of $d \in \pi_1( G)$ to a cocharacter $\delta: \Gm \too T_G \subseteq G$.
  We denote by $t^\delta \in LG(k )$ the image of the point $t \in L\Gm( k) = k((t))^*$ under $\delta$. Let
  \begin{equation*}
    \pi: \tG \too G
  \end{equation*}
  be the universal cover of $G$. The choice of $\delta$ implies that the composition
  \begin{equation*}
    Q_{\tG} \xrightarrow{t^{\delta} \cdot \pi ( \_)} Q_G \xrightarrow{q_y} \cM_G
  \end{equation*}
  maps to the component $\cM_G^d$ of $\cM_G$. This defines a morphism 
  \begin{equation*}
    q_y^{\delta}: Q_{\tG} \too \cM_G^d
  \end{equation*}
  which already appears in \cite[Proposition 1.5]{BLS}.
  Due to the uniformization theorem of Drinfeld and Simpson \cite{DS}, this morphism $q_y^{\delta}$ is surjective.

  Let $\tfg_{\alpha}$ denote the root space in the Lie algebra $\tfg$ of $\tG$ corresponding to a root $\alpha$.
  We denote by $a = 1 - \alpha$ the affine root of $L\tG$ corresponding to the root space
  \begin{equation*}
    t \cdot \tfg_{-\alpha} \subset L \tfg := k[t, t^{-1}] \otimes_k \tfg,
  \end{equation*}
  and by $U_a \subset L\tG$ the corresponding copy of the additive group over $k$. Let
  \begin{equation*}
    i_a: \SL_2 \too L\tG
  \end{equation*}
  denote the embedding whose image is generated by $U_a$ and $U_{-a}$. Then $i_a^{-1}( L^+ \tG)$ is a subgroup of triangular matrices in $\SL_2$.
  Therefore $i_a$ induces an embedding
  \begin{equation*}
    j_a: \PP^1 \too Q_{\tG}
  \end{equation*}
  whose image is a simple example of a Schubert variety in $Q_{\tG}$. The induced map
  \begin{equation*}
    (j_a)^*: \Pic( Q_{\tG}) \too \Pic( \PP^1) \cong \ZZ
  \end{equation*}
  is an isomorphism according to \cite[Proposition 2.3]{KNR} and \cite[Corollary 12]{Fa}.

  We remark that the composition
  \begin{equation*}
    \PP^1 \xrightarrow{j_a} Q_{\tG} \xrightarrow{q_y^{\delta}} \cM_G^d
  \end{equation*}
  is a generalization to principal $G$-bundles of the Hecke lines in the moduli space of vector bundles
  introduced in \cite[section 4, p.\ 397]{NR}. For instance, if $G=\SL_2$ and $\delta$ is the trivial cocharacter,
  then this $\PP^1$-family of principal $\SL_2$-bundles over $X$ is the $\PP^1$-family of all locally free sheaves
  $E$ of rank $2$ over $X$ such that
  \begin{equation*}
    \cO_X( -y) \oplus \cO_X \subsetneqq E \subsetneqq \cO_X \oplus \cO_X( y).
  \end{equation*}

  The central charge homomorphism in \eqref{eq:charge} is by definition the composition
  \begin{equation*}
    \Pic( \cM_G^d) \xrightarrow{(q_y^{\delta})^*} \Pic( Q_{\tG}) \xrightarrow{(j_a)^*} \Pic( \PP^1) \xrightarrow{\deg} \ZZ.
  \end{equation*}
  This homomorphism does not depend on the choices made in its construction.

  Choosing an embedding $G \hookrightarrow \SL_N$, it is easy to see that
  \begin{equation*}
    H^0( Q_{\tG}, L) \neq 0    
  \end{equation*}
  holds for \emph{some} $L \in \Pic( Q_{\tG})$ with $\deg( L_{|\PP^1}) > 0$; indeed, it holds for \emph{all} such $L$ according to \cite[Theorem 7]{Fa}.
  Since $Q_{\tG}$ is an inductive limit of reduced and irreducible projective Schubert varieties, we can conclude that
  \begin{equation*}
    H^0( Q_{\tG}, L) = 0
  \end{equation*}
  holds for all $L \in \Pic( Q_{\tG})$ with $\deg( L_{|\PP^1}) < 0$.

  Let $\cE^{\univ}_x \supset \cE_P \too \cU$ be a reduction of $\cE^{\univ}_x$ to a parabolic subgroup $P \subset G$
  over a big open substack $\cU \subseteq \cM_G^d$. By \cite[Remark 2.2]{Ra}, it suffices to prove
  \begin{equation*}
    \deg \ad( \cE_P) \leq 0
  \end{equation*}
  for the adjoint vector bundle $\ad( \cE_P) \too \cU$. This degree is by definition the central charge of the unique line bundle $\cL \too \cM_G^d$
  that extends the top exterior power
  \begin{equation*}
    \bigwedge\nolimits^{\dim( P)} \ad( \cE_P) \too \cU.
  \end{equation*}
  The inclusion $\ad( \cE_P) \subset \ad( \cE^{\univ}_x)$ induces an embedding of $\cL_{|\cU}$ into the vector bundle
  \begin{equation*} \label{eq:det_ad}
    \cV := \bigwedge\nolimits^{\dim P} \ad( \cE^{\univ}_x) \too \cM_G.
  \end{equation*}
  This embedding over $\cU$ extends to a generically injective morphism from $\cL$ to $\cV$ over $\cM_G^d$ due to Hartogs' theorem.
  By pullback, we obtain a nonzero morphism from
  \begin{equation*}
    L := (q_y^{\delta})^* \cL \too Q_{\tG}
  \end{equation*}
  to the vector bundle $(q_y^{\delta})^* \cV \too Q_{\tG}$.
  However, this vector bundle is trivial, since the principal $G$--bundle $(q_y)^* \cE^{\univ}_x \too Q_G$ is trivial. Therefore,
  \begin{equation*}
    H^0( Q_{\tG}, L^{\dual}) \neq 0,
  \end{equation*}
  and hence $\deg( L_{|\PP^1}) \leq 0$. But $\deg( L_{|\PP^1})$ is by definition the central charge of $\cL$.
\end{proof}

\begin{theorem}\label{thm1}
  If the curve $X$ has genus $g_X \geq 2$, then the universal bundle $\cE^{\univ}$
  over $X \times \cM_G^d$ is stable with respect to any polarization on $X\times \cM_G^d$.
\end{theorem}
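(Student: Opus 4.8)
The plan is to reduce stability over $X \times \cM_G^d$ to two one--variable statements: semistability along a general slice $\{x\} \times \cM_G^d$, which is exactly Proposition \ref{prop-s}, and \emph{stability} along a general slice $X \times \{b\}$, which will hold because a general $G$--bundle parametrized by $\cM_G^d$ is stable once $g_X \geq 2$. Recall from the discussion preceding Proposition \ref{prop-s} that any polarization has the form $a \cdot (\deg \text{ on } X) + b \cdot (\text{central charge on } \cM_G^d)$ with $a, b > 0$, and that for a line bundle $\cL$ on $X \times \cM_G^d$ these two homomorphisms compute $\deg( \cL|_{X \times \{b\}})$ and the central charge of $\cL|_{\{x\} \times \cM_G^d}$ respectively, both being independent of the chosen point since $\Pic( \cM_G^d)$ is discrete. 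So I fix a reduction $\cE^{\univ} \supset \cE_P \too \cU$ to a proper parabolic $P \subset G$ over a big open substack $\cU \subseteq X \times \cM_G^d$ and a strictly dominant character $\chi\colon P \too \Gm$ trivial on the center, set $\cL := \cE_P(\chi)$ and $Z := (X \times \cM_G^d) \setminus \cU$, and must prove $\deg \cL < 0$ for the given polarization.

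For the central--charge term I would choose a general point $x \in X$. As $Z$ has codimension at least $2$ in $X \times \cM_G^d$, one has $\dim Z \leq \dim \cM_G^d - 1$, so a dimension count shows that for general $x$ the intersection $Z \cap (\{x\} \times \cM_G^d)$ has codimension at least $2$ in $\{x\} \times \cM_G^d \cong \cM_G^d$. Hence $\cE_P$ restricts to a reduction of $\cE^{\univ}_x$ to $P$ over a big open substack of $\cM_G^d$, and Proposition \ref{prop-s} yields $\mathrm{charge}( \cL|_{\{x\} \times \cM_G^d}) \leq 0$.

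For the degree--on--$X$ term I would use that $\cM_G^d$ is irreducible, being the image of the irreducible ind--scheme $Q_{\tG}$ under the surjection $q_y^{\delta}$, and that for $g_X \geq 2$ its stable locus is a nonempty, hence dense, open substack. Since the image of $Z$ under the projection to $\cM_G^d$ is a proper closed substack, a general point $b$ both lies in the stable locus and satisfies $X \times \{b\} \subseteq \cU$. Then $\cE_P$ restricts to an everywhere--defined reduction $E_{b, P}$ of the stable $G$--bundle $E_b := \cE^{\univ}|_{X \times \{b\}}$, and stability of $E_b$ applied to this reduction to the proper parabolic $P$ and the strictly dominant $\chi$ gives $\deg( \cL|_{X \times \{b\}}) = \deg E_{b, P}(\chi) < 0$. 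Combining the two estimates with $a, b > 0$ shows that $\deg \cL$ equals $a$ times a strictly negative number plus $b$ times a nonpositive number, hence $\deg \cL < 0$, as required.

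The hard part is the input for the $X$--slice, namely that a general $G$--bundle of topological type $d$ is stable; this is exactly where the hypothesis $g_X \geq 2$ enters, and it is what upgrades the non--strict inequality coming from Proposition \ref{prop-s} to the strict inequality demanded by stability. I would establish it either by citing nonemptiness of the stable locus for $g_X \geq 2$, or by a Harder--Narasimhan stratification argument bounding the codimension of the unstable locus in the irreducible stack $\cM_G^d$ from below by a positive quantity that grows with $g_X$.
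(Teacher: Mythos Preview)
Your proposal is correct and follows the same approach as the paper: the paper's proof also combines the semistability of $\cE^{\univ}_x$ from Proposition~\ref{prop-s} with the stability of the restriction of $\cE^{\univ}$ to a general point of $\cM_G^d$ (nonemptiness of the stable locus for $g_X \geq 2$), and then refers to the proof of \cite[Lemma~2.2]{BBN} for exactly the two-slice degree argument that you spell out. Your treatment is simply a more explicit version of that cited lemma.
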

\begin{proof}
  The restriction of $\cE^{\univ}$ to any point in $X$ is semistable by Proposition \ref{prop-s}.
  The restriction of $\cE^{\univ}$ to a general point in $\cM_G^d$ is stable,
  because the open locus $\cM_G^d \supset \cM^{d,\stab}_G$ of stable $G$--bundles over $X$ is nonempty for $g_X \geq 2$.
  These two facts imply the theorem, as the proof of \cite[Lemma 2.2]{BBN} shows.
\end{proof}

\begin{remark}
  If $G$ is only semisimple, then its universal cover $\tG$ is a product of $s \geq 2$ almost simple factors. Each of them defines a central charge,
  and their direct sum
  \begin{equation*}
    \Pic( \cM_G^d) \too \ZZ^s
  \end{equation*}
  again has finite kernel and cokernel. Proposition \ref{prop-s} remains true in this generality, now with respect to any polarization,
  because $H^0( Q_{\tG}, L) = 0$ for every line bundle $L$ over $Q_{\tG}$ with at least one negative central charge.
  Consequently, Theorem \ref{thm1}, and the two corollaries in the next section, all generalize to the semisimple case as well.
\end{remark}

\section{Stability over the coarse moduli space} \label{sec:coarse}

In this section, we assume that the base field is $k = \CC$.
Let $M_G$ denote the coarse moduli space of semistable principal $G$--bundles over $X$. 
Its connected components $M_G^d$ are irreducible normal projective varieties, indexed by $d \in \pi_1( G)$.

A principal $G$--bundle $E$ over $X$ is called \emph{regularly stable} if $E$ is stable and $\Aut( E) = Z_G$, the center of $G$.
We denote by $M_G^{d, \regstab} \subset M_G^d$
the open locus of regularly stable principal $G$--bundles. The quotient $G/Z_G$ will be denoted by $G^{\ad}$.
The appropriate restriction of the principal $G^{\ad}$--bundle
$\cE^{\univ}/Z_G$ over $X \times \cM_G^d$
 descends to a principal $G^{\ad}$--bundle
\begin{equation*}
 \cE^{\Poinc} \too X \times M_G^{d, \regstab}.
\end{equation*}
Let $\cE^{\Poinc}_x$ denote the restriction of $\cE^{\Poinc}$ to $\{x\} \times M_G^{d, \regstab}$ for a closed point $x \in X$.

From now on, we again assume that $G$ is almost simple.
We also assume that the curve $X$ has genus $g_X \geq 2$, and even $g_X \geq 3$ if $G^{\ad} \cong \PGL_2$.
Then $M^{d,\regstab}_G$ is the smooth locus of $M^d_G$ by \cite[Corollary 3.4]{BH2}.
In particular, the complement $M^d_G \setminus M^{d,\regstab}_G$ has codimension at least two.
Therefore, $\Pic( M_G^{d, \regstab})$ embeds as a subgroup into $\Pic( \cM_G^d)$ (cf. \cite[\S (13.1)]{BLS}), and the central charge restricts to a natural homomorphism
\begin{equation*}
\Pic( M_G^{d, \regstab}) \,\too\, \ZZ
\end{equation*}
whose kernel and cokernel are still both finite. This again provides a polarization on $M_G^{d, \regstab}$,
so there is no need to choose one in dealing with stability over $M_G^{d, \regstab}$.

As direct consequences of Proposition \ref{prop-s} and Theorem \ref{thm1}, we obtain the following.
\begin{corollary} \label{cor:semistab}
The principal $G^{\ad}$--bundle $\cE^{\Poinc}_x$ over $M_G^{d, \regstab}$ is semistable.
\end{corollary}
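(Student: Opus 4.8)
The plan is to deduce the semistability of $\cE^{\Poinc}_x$ over $M_G^{d, \regstab}$ directly from Proposition \ref{prop-s}, by pulling parabolic reductions back from the coarse moduli space to the moduli stack, where Proposition \ref{prop-s} applies verbatim.

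First I would set up the comparison morphism. Since every regularly stable bundle has automorphism group exactly $Z_G$, the regularly stable locus $\cM_G^{d, \regstab} \subseteq \cM_G^d$ inside the stack is a gerbe banded by $Z_G$ over the coarse space; write $\phi: \cM_G^{d, \regstab} \too M_G^{d, \regstab}$ for its structure morphism. Because $M_G^d \setminus M_G^{d, \regstab}$ has codimension at least two, the open substack $\cM_G^{d, \regstab}$ is big in $\cM_G^d$. By construction $\cE^{\Poinc}$ is the descent of $\cE^{\univ}/Z_G$, so the pullback $\phi^* \cE^{\Poinc}_x$ is canonically identified with the slice restriction $\cE^{\univ}_x/Z_G$.

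Next I would match parabolic reductions on the two sides. The central isogeny $G \too G^{\ad}$ gives a bijection between parabolic subgroups $P \subset G$ and $P^{\ad} \subset G^{\ad}$, together with an isomorphism of flag varieties $G/P \cong G^{\ad}/P^{\ad}$; moreover the adjoint action of $G$ factors through $G^{\ad}$, so that $\ad(\cE^{\univ}_x) = \ad(\cE^{\univ}_x/Z_G)$. Hence a reduction of $\cE^{\Poinc}_x$ to $P^{\ad}$ over a big open $\cU \subseteq M_G^{d, \regstab}$ pulls back, along $\phi$, to a reduction $\cE_P$ of $\cE^{\univ}_x$ to $P$ over the big open substack $\phi^{-1}(\cU) \subseteq \cM_G^d$, with matching adjoint bundles $\phi^* \ad(E_{P^{\ad}}) = \ad(\cE_P)$. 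By \cite[Remark 2.2]{Ra} it suffices, on each side, to verify $\deg \ad(\text{reduction}) \leq 0$ for every such reduction. Then I would compare the two degrees: the central charge on $M_G^{d, \regstab}$ is by definition the restriction of the central charge on $\cM_G^d$ along the embedding $\Pic( M_G^{d, \regstab}) \hookrightarrow \Pic( \cM_G^d)$, which is exactly pullback by $\phi$ followed by the unique extension across the codimension-two complement. Taking top exterior powers and using $\phi^* \ad(E_{P^{\ad}}) = \ad(\cE_P)$ therefore yields $\deg \ad(E_{P^{\ad}}) = \deg \ad(\cE_P)$, and Proposition \ref{prop-s} makes the right-hand side $\leq 0$.

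I expect the main obstacle to be the bookkeeping around $\phi$: confirming that $\cM_G^{d, \regstab}$ really is a $Z_G$-gerbe that is big inside $\cM_G^d$, that pullback genuinely carries reductions of the $G^{\ad}$-bundle to reductions of the $G$-bundle (this rests on $G/P \cong G^{\ad}/P^{\ad}$ and on the identification of adjoint bundles under the isogeny), and—most delicately—that the degree computed via the central charge on the coarse space coincides with the stacky degree after pullback, so that Proposition \ref{prop-s} can be invoked without modification.
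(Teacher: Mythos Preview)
Your proposal is correct and is exactly what the paper intends: the paper records Corollary~\ref{cor:semistab} as a ``direct consequence'' of Proposition~\ref{prop-s} without further argument, and your write-up supplies precisely the expected details---pulling a parabolic reduction of $\cE^{\Poinc}_x$ back through the $Z_G$-gerbe $\phi\colon \cM_G^{d,\regstab}\to M_G^{d,\regstab}$, using $G/P\cong G^{\ad}/P^{\ad}$ to obtain a reduction of $\cE^{\univ}_x$ over a big open of $\cM_G^d$, and matching degrees via the identification of $\Pic(M_G^{d,\regstab})$ with a subgroup of $\Pic(\cM_G^d)$ already stated in the paper. There is no substantive difference in approach.
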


\begin{corollary} \label{cor:stab}
The Poincar\'{e} adjoint bundle $\cE^{\Poinc}$ over $X \times M_G^{d, \regstab}$ is stable with respect to any polarization on $X \times M^{d,\regstab}_G$.
\end{corollary}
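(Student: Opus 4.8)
The plan is to mirror the proof of Theorem \ref{thm1}, now over the coarse space $M_G^{d,\regstab}$ instead of the stack. Stability of $\cE^{\Poinc}$ over $X \times M_G^{d,\regstab}$ with respect to any polarization will follow from two restriction statements, one for each factor, fed into the destabilization argument of \cite[Lemma 2.2]{BBN}. The delicate point throughout is the passage between the stack and its coarse space, which is governed by a gerbe.

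The first ingredient is that the restriction $\cE^{\Poinc}_x$ to each slice $\{x\} \times M_G^{d,\regstab}$ is semistable; this is Corollary \ref{cor:semistab}. The second ingredient is that the restriction of $\cE^{\Poinc}$ to a general $X$-slice is stable. Indeed, for any closed point $[E] \in M_G^{d,\regstab}$ the restriction $\cE^{\Poinc}|_{X \times \{[E]\}}$ is the adjoint bundle $E/Z_G$; since $E$ is regularly stable, in particular stable, and since a $G$-bundle is stable precisely when the associated $G^{\ad}$-bundle is --- reductions to a parabolic $P \subset G$ correspond to reductions to $P/Z_G \subset G^{\ad}$, and the strictly dominant characters of $P$ trivial on $Z_G$ are exactly those of $P/Z_G$ --- the bundle $E/Z_G$ is a stable $G^{\ad}$-bundle over $X$.

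With both restriction statements in hand I would conclude as in the proof of Theorem \ref{thm1}, running the argument of \cite[Lemma 2.2]{BBN}: a reduction of $\cE^{\Poinc}$ contradicting stability for some polarization would, upon restriction, violate either the semistability on the slices $\{x\} \times M_G^{d,\regstab}$ or the stability on a general $X$-slice. The main obstacle is to adapt this last step to the present setting, which differs from \cite{BBN} in that $\cE^{\Poinc}$ is a principal $G^{\ad}$-bundle rather than a vector bundle and $M_G^{d,\regstab}$ is only quasi-projective, with polarization supplied by the central charge. I would handle the first point by testing stability through reductions to parabolics, equivalently through the adjoint vector bundle $\ad(\cE^{\Poinc})$, and the second by using that the complement of $M_G^{d,\regstab}$ in the normal projective variety $M_G^d$ has codimension at least two, so that a destabilizing reduction defined over a big open extends and the degree computations apply with the central-charge polarization. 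Alternatively, one can bypass \cite{BBN} and descend stability directly from Theorem \ref{thm1} along the gerbe $X \times \cM_G^{d,\regstab} \too X \times M_G^{d,\regstab}$ banded by the finite center $Z_G$: there $\cE^{\Poinc}$ pulls back to $(\cE^{\univ}/Z_G)|_{X \times \cM_G^{d,\regstab}}$, which is stable by Theorem \ref{thm1} together with the parabolic correspondence above, and the gerbe is faithfully flat and preserves both reductions over big opens and the central charge, so stability descends.
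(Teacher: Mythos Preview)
Your proposal is correct. The paper gives no explicit proof, declaring Corollary \ref{cor:stab} a direct consequence of Theorem \ref{thm1}; your alternative approach --- descending stability of $\cE^{\univ}/Z_G$ along the $Z_G$--gerbe $X \times \cM_G^{d,\regstab} \to X \times M_G^{d,\regstab}$, using that $\Pic(M_G^{d,\regstab})$ embeds compatibly with the central charge --- is exactly what that phrase encodes, while your first approach (rerunning the \cite{BBN} argument over the coarse space via Corollary \ref{cor:semistab}) is a valid but more circuitous route to the same end.
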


\begin{remark}
If the base field $k = \kbar$ has characteristic $p > 0$, then Corollary \ref{cor:semistab} and Corollary \ref{cor:stab}
remain valid at least for $g_X \geq 4$, because the locus of regularly stable principal $G$--bundles $E$ over $X$
is still open by \cite[Proposition 2.3]{BH3}, and its complement still has codimension at least two by \cite[Theorem 2.5]{BH3}. 
\end{remark}

\end{document}